\def\tr{\triangleright}
\newtheorem{theorem}{Theorem}
\newtheorem{proposition}[theorem]{Proposition}
\newtheorem{corollary}[theorem]{Corollary}
\theoremstyle{definition}
\newtheorem{example}{Example}
\newtheorem{definition}{Definition}
\newtheorem{remark}{Remark}
\title{\Large \textbf{Bilinear Enhancements of Quandle Invariants}}
\date{}
\author{Will Gilroy\footnote{Email: wgilroy@hmc.edu. Supported by Harvey Mudd College's Giovanni Borrelli Mathematics Fellowship.}
\and
Sam Nelson\footnote{Email: Sam.Nelson@cmc.edu. Partially supported by Simons Foundation collaboration grant 702597.}}
\begin{document}
\maketitle

\begin{abstract}
We enhance the quandle counting invariants of oriented classical and virtual
links using a construction similar to quandle modules but inspired
by symplectic quandle operations rather than Alexander quandle operations. 
Given a finite quandle $X$ and a vector space $V$ over a field, sets of 
bilinear forms on $V$ indexed by pairs of elements of $X$ satisfying certain
conditions yield new enhanced multiset- and polynomial-valued invariants of 
oriented classical and virtual links. We
provide examples to illustrate the computation of the invariants and to show that
the enhancement is proper.
\end{abstract}

\parbox{6in} {\textsc{Keywords:} Quandles, enhancements, bilinear enhancements

\smallskip

\textsc{2020 MSC:} 57K12}

\section{\large\textbf{Introduction}}\label{I}

\textit{Quandles} are algebraic structures with axioms encoding
the Reidemeister moves in knot theory, analogous to the way the group axioms 
encode geometric symmetries. Finite quandles $X$ define non-negative integer
valued invariants of oriented links in $\mathbb{R}^3$ via the
set of quandle homomorphisms from the \textit{knot quandle} $\mathcal{Q}(K)$ 
of a knot or link $K$ to $X$. The elements
of the homset can be represented as \textit{colorings} of a diagram of the
knot or link, i.e., assignments of elements of $X$ to the arcs in the diagram,
in a way analogous to representation of linear transformations via matrices. More
precisely, a different choice of diagram for the knot or link will
give a different representation of the the same homset element in the same way
that a different choice of basis yields a different matrix for the same
linear transformation. The cardinality of the quandle homset is then an
integer-valued invariant of the knot or link for each finite quandle $X$,
known as the \textit{quandle counting invariant}, denoted 
$\Phi_X^{\mathbb{Z}}(K)=|\mathrm{Hom}(\mathcal{Q}(K),X)|$.

An \textit{enhancement} of the quandle counting invariant can be defined
whenever we have an invariant $\phi$ of quandle homset elements; then
the multiset of $\phi$-values over the homset defines an invariant whose
cardinality recovers $\Phi_X^{\mathbb{Z}}(K)$ but is generally stronger. In
several previous papers such as \cite{HHNYZ}, the second listed author as 
well as others
used \textit{quandle modules} to enhance the quandle counting invariant. These
are modules over commutative rings with identity which are invariants of 
quandle homset elements, very much like Alexander module invariants of 
the quandle-colored diagrams. Such a module can be interpreted as a set of 
\textit{bead colorings} of a quandle-colored diagram with bead interaction
rules at crossings representing a kind Alexander quandle operation deformed
by the quandle homset colors.

In this paper we pursue a similar idea with role of Alexander quandles replaced
with another type of quandle structure defined on modules over commutative 
rings with identity, namely \textit{symplectic quandles}. The paper is organized
as follows. In Section \ref{QB} we review the basics of quandle theory. In 
Section \ref{BE} we define our \textit{bilinear enhancements} and use them
to introduce an infinite family of polynomial-valued invariants of knots and 
links. In Section \ref{E} we provide examples to illustrate the computation
of the invariants and in particular show that the enhancement is proper, i.e,
not determined by $\Phi_X^{\mathbb{Z}}$. We conclude in Section \ref{Q} with 
questions for future research.

\section{\large\textbf{Quandle Basics}}\label{QB}

We begin with a definition. See \cite{EN,J,M} for more.

\begin{definition}
A \textit{quandle} is a set $X$ with a binary operation $\tr:X\times X\to X$
satisfying the conditions
\begin{itemize}
\item[(i)] For all $x\in X$, $x\tr x=x$,
\item[(ii)] For all $y\in X$, the map $f_y:X\to X$ defined by $f_y(x)=x\tr y$
is invertible, and
\item[(iii)] For all $x,y,z\in X$, we have
\[(x\tr y)\tr z=(x\tr z)\tr(y\tr z).\] 
\end{itemize}
A quandle in which the maps $f_y$ from axiom (ii) are involutions is called
an \textit{involutory quandle} or \textit{kei}. We may write $x\tr^{-1} y$ to
refer to $f_y^{-1}(x)$.
\end{definition}

\begin{example}
Common examples of quandle structures include:
\begin{itemize}
\item Groups $G$ with $g\tr h=h^{-1}gh$, known as \textit{conjugation quandles},
\item Groups $G$ with $g\tr h=hg^{-1}h$, known as \textit{core quandles},
\item Modules over $\mathbb{Z}[t^{\pm 1}]$ with $x\tr y =tx+(1-t)y$, known
as \textit{Alexander quandles}, and
\item Modules $M$ over a commutative ring $R$ with a symplectic form 
$[,]:M\times M\to R$ with $x\tr y=x+[x,y]y$, known as \textit{symplectic 
quandles}.
\end{itemize}
\end{example}

\begin{example}
An important example is the \textit{knot quandle} $\mathcal{Q}(K)$ of an 
oriented knot or link $K$. Given a diagram $D$ of $K$, $\mathcal{Q}(K)$
has a presentation with a generator $a_i$ for each arc in $D$ and a relation
$a_i\tr a_j=a_k$ at each crossing in $D$ as shown.
\[\includegraphics{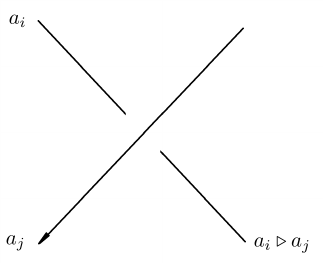}\]
Then the knot quandle is the set of equivalence classes of quandle
words in these generators, i.e. expressions defined recursively as 
generators $a_j$ or expressions of the form $w\tr z$ or $w\tr^{-1} z$ 
where $w,z$ are words, under the equivalence relation generated by the crossing 
relations together with the quandle axioms.
\end{example}

Now, let $X$ be a finite quandle and $D$ an oriented knot or link diagram
representing an oriented knot or link $K$. Then an assignment of elements
of $X$ to each generator $a_i$ in $D$ defines a quandle homomorphism 
$f:\mathcal{Q}(K)\to X$ if and only if the crossing relations $f(a_i)\tr f(a_j)=f(a_k)$ are satisfied at every crossing. Such an assignment is known as an
\textit{$X$-coloring} of $D$. In this way we can represent the quandle 
homset $\mathrm{Hom}(\mathcal{Q}(K),X)$ via $X$-colorings of $D$ and use this
to compute the quandle homset by finding all colorings which satisfy
the coloring condition at every crossing. Let us write $D_f$ to denote the
$X$-coloring of $D$ determined by the homset element $f$.

Performing a Reidemeister move on an $X$-colored diagram results
in a unique $X$-coloring on the diagram following the move; thus, we can 
consider invariants of $X$-colored diagrams for any quandle $X$. An invariant
$\phi$ of $X$-colored diagrams then yields an invariant of links
called an \textit{enhancement} by taking the multiset of $\phi$-values
over the quandle homset $\mathrm{Hom}(\mathcal{Q}(K),X)$. Examples are 
plentiful in the literature, starting with the \textit{quandle cocycle 
invariants} introduced in \cite{CJKLS} and continuing with many more
enhancements such as \textit{quantum enhancements} \cite{N}, 
\textit{quandle module enhancements} \cite{CEGM}, 
\textit{quandle polynomial enhancements} \cite{N2} and many others.

\section{\large\textbf{Bilinear Enhancements}} \label{BE}

We begin with a definition.

\begin{definition} \label{def1}
Let $X$ be a finite quandle and $V$ a vector space over a field $\mathbb{F}$. 
An \textit{$X$-bilinear form} on $V$ is a choice of bilinear form 
$[,]_{x,y}:V\times V\to \mathbb{F}$ for each ordered pair $x,y\in X$
of quandle elements such that for all $x,y,z\in X$ and $a,b,c\in M$ we have
\[\begin{array}{rclr}
{}[a,a]_{x,x} & = & 0 & (i) \\
{}[a,b]_{x,y} & = & [a+[a,c]_{x,z}c,b+[b,c]_{y,z}c]_{x\tr z, y\tr z} & (ii)\\
{}[a,c]_{x\tr y,z} +[a,b]_{x,y}[b,c]_{x\tr y,z} & = & [a,c]_{x,z} + [a,b]_{x,y}[b,c]_{y,z} & (iii)\\
\end{array}\]
\end{definition}

We will generally denote an $X$-bilinear form as $\phi$. If the dimension 
of $V$ is $n$ and $|X|=m$, we can specify such a $\phi$ with an $m\times m$ 
matrix of $n\times n$ matrices $B_{x,y}$ over $\mathbb{F}$ such that for 
$\vec{u},\vec{v}\in V$ and $x,y\in X$ we have
\[{}[\vec{u},\vec{v}]_{x,y}= \vec{u}^TB_{x,y}\vec{v}.\]

\begin{example}
Given any quandle $X$ and $\mathbb{F}$-vector space $V$, we obtain a trivial
example of an $X$-bilinear form by setting $[,]_{x,y}=0$.
\end{example}

\begin{example}
As a (slightly) less trivial example, if $X=V$ is a symplectic quandle with 
symplectic form $[,]$ then setting $[,]_{x,y}=[,]$ gives $X$ the structure of
an $X$-bilinear form.
\end{example}

\begin{example}\label{ex:nt}
Let $X$ be the quandle structure on the set $\{1,2,3\}$ defined by the 
operation table 
\[\begin{array}{r|rrr}
\tr & 1 & 2 & 3 \\ \hline
1 & 1 & 1 & 2 \\
2 & 2 & 2 & 1 \\
3 & 3 & 3 & 3
\end{array}\]
and let $V=(\mathbb{Z}_2)^2$. Then one can check that the array of matrices
\[\left[\begin{array}{rrr}
\left[\begin{array}{rr} 0 & 1 \\ 1 & 0 \end{array} \right] &
\left[\begin{array}{rr} 0 & 1 \\ 1 & 0 \end{array} \right] &
\left[\begin{array}{rr} 0 & 0 \\ 0 & 0 \end{array} \right] \\ \\
\left[\begin{array}{rr} 0 & 1 \\ 1 & 0 \end{array} \right] &
\left[\begin{array}{rr} 0 & 1 \\ 1 & 0 \end{array} \right] &
\left[\begin{array}{rr} 0 & 0 \\ 0 & 0 \end{array} \right] \\ \\
\left[\begin{array}{rr} 0 & 0 \\ 0 & 0 \end{array} \right] &
\left[\begin{array}{rr} 0 & 0 \\ 0 & 0 \end{array} \right] &
\left[\begin{array}{rr} 0 & 0 \\ 0 & 0 \end{array} \right] \\
\end{array}\right]\]
defines an $X$-bilinear form on $V$.
\end{example}

Definition \ref{def1} is motivated by the notion of giving a secondary 
coloring to the arcs in an $X$-colored oriented link diagram with 
vectors pictured as ``beads'' satisfying the \textit{bead coloring rule}:
\[\includegraphics{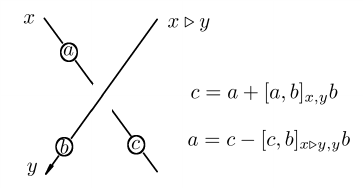}.\]

\begin{definition}
Let $X$ be a finite quandle, $V$ an $\mathbb{F}$-vector space and
$\phi$ an $X$-bilinear form. A \textit{$(X,\phi)$-coloring} of an oriented 
link diagram $D$ is an assignment of an element of $X$ and an element of $V$
to each arc in $D$ such that the bead coloring rule is satisfied at every 
crossing in $D$.
\end{definition} 

\noindent
We can now state our main result.

\begin{proposition}
If an oriented $X$-colored link diagram has $\phi$-coloring before
a Reidemeister move, there is a unique $\phi$-coloring of the diagram
after the move which agrees with the original coloring outside the
neighborhood of the move.
\end{proposition}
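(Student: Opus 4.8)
The plan is to verify that an $(X,\phi)$-coloring is carried through each oriented Reidemeister move to a unique $(X,\phi)$-coloring, treating the underlying $X$-coloring as data that already transforms canonically (by the standard correspondence between quandle colorings and the moves) and concentrating on the bead layer. First I would record the local bead rule read off the picture: at a positive crossing an under-arc colored $x$ with bead $a$ passing under an over-arc colored $y$ with bead $b$ exits colored $x\tr y$ with bead $a+[a,b]_{x,y}\,b$, the over-arc being unchanged. Since $[a,b]_{x,y}$ is linear in $a$, this is a linear endomorphism $P\colon a\mapsto a+[a,b]_{x,y}\,b$ of $V$, a rank-one update of the identity. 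A key preliminary is to show $P$ is invertible, with inverse the negative-crossing rule $N\colon a'\mapsto a'-[a',b]_{x\tr y,\,y}\,b$: substituting $c=b$, $z=y$ into condition (iii) and killing the resulting $[b,b]_{y,y}$ term by condition (i) yields $[P(a),b]_{x\tr y,y}=[a,b]_{x,y}$, whence $N(P(a))=a$. This invertibility lets beads propagate uniquely in either direction along a strand, and it is exactly what makes the Reidemeister II move go through.

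With the local rules and their invertibility in place, I would treat existence and uniqueness move by move. Uniqueness is the easy direction: the beads on the arcs entering the neighborhood of the move are prescribed by the coloring outside it, and since the bead rule is invertible at every crossing it then determines the bead on every interior and outgoing arc, so at most one extension can exist. For existence one checks that the beads induced on the outgoing boundary arcs coincide whether computed on the pre-move or the post-move side, and this is where the axioms are consumed. The Reidemeister I move, after imposing $x\tr x=x$ and equal beads on the self-crossing arc, reduces to $a+[a,a]_{x,x}\,a=a$, i.e. to condition (i); the Reidemeister II move is settled by the invertibility just established. For virtual or mixed moves there is nothing to check, since beads pass through virtual crossings unchanged.

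The Reidemeister III move is where the real work lies, and I expect it to be the main obstacle. Labelling the three strands $(x,a)$, $(y,b)$, $(z,c)$ so that the $a$-strand passes under both others while the $b$-strand passes under the $c$-strand, I would propagate the bead rule through all three crossings on each side of the move and compare the output bead on the all-under strand. On the side where $a$ crosses under $b$ and then under $c$, bilinearity expands the coefficient of $c$ to $[a,c]_{x\tr y,z}+[a,b]_{x,y}[b,c]_{x\tr y,z}$. On the other side, where $a$ first passes under $c$, becoming $a+[a,c]_{x,z}\,c$, and then under the already-modified $b$-strand $b+[b,c]_{y,z}\,c$, one must invoke condition (ii) to collapse the pairing coefficient $[a+[a,c]_{x,z}c,\,b+[b,c]_{y,z}c]_{x\tr z,\,y\tr z}$ to $[a,b]_{x,y}$, after which the coefficient of $c$ becomes $[a,c]_{x,z}+[a,b]_{x,y}[b,c]_{y,z}$. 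Equating the two outputs, the coefficients of $b$ agree automatically while the coefficients of $c$ agree precisely by condition (iii); the beads on the other two strands match trivially, since the $c$-strand is never modified and the $b$-strand interacts only with it, and the output colors agree by the self-distributive quandle axiom. The delicate points I anticipate are keeping straight which pair of colors indexes each form as the arcs are relabelled by the quandle operation, tracking the order in which crossings are met on the two sides, and confirming that the single oriented variant treated suffices --- the remaining variants following from a standard generating-set argument for oriented Reidemeister moves together with the R1 and R2 cases already handled.
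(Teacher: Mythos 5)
Your proposal is correct and follows essentially the same route as the paper: check a generating set of oriented Reidemeister moves, with RI giving axiom (i), RII reducing to the identity $[a+[a,b]_{x,y}b,b]_{x\tr y,y}=[a,b]_{x,y}$, and RIII handled by expanding the output bead on both sides by bilinearity and matching the $b$- and $c$-coefficients against axioms (ii) and (iii). The only (harmless) divergence is that you derive the RII identity from axiom (iii) with $c=b$, $z=y$, while the paper gets it from axiom (ii) with the same specialization; both work, and your explicit treatment of uniqueness via invertibility of the crossing map is a point the paper leaves implicit.
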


\begin{proof}
This is a matter of checking the statement for a generating set of oriented 
Reidemeister moves as in \cite{P}; indeed, the axioms defining $X$-bilinear
forms are chosen precisely with this property in mind.  We apply the above 
\textit{bead coloring rule} to the Reidemeister moves to generate the 
conditions given in Definition \ref{def1}. Let us use
the generating set of Reidemeister moves shown:
\[\includegraphics{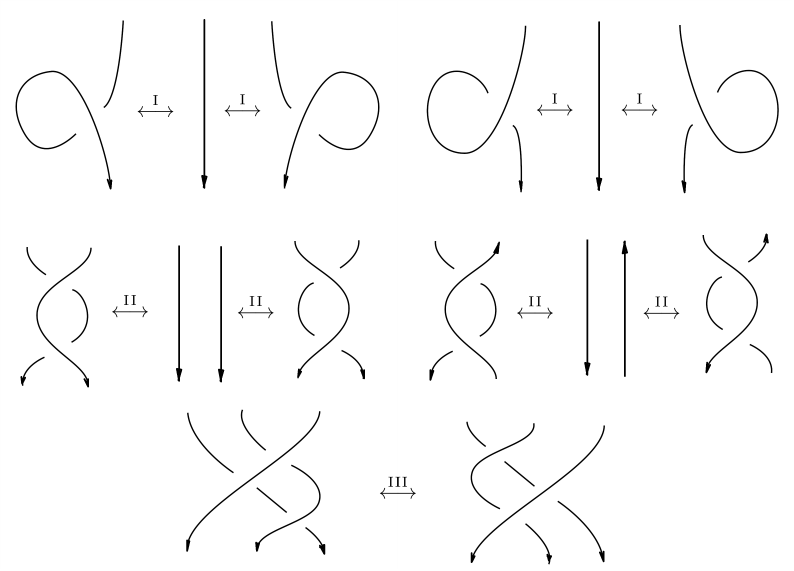}\]

\noindent
Each type I move gives condition (i); we illustrate 
with one of the four:
\[\includegraphics{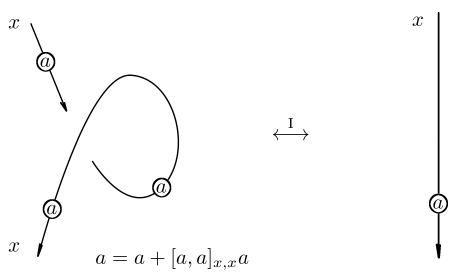}\]

For the type II Reidemeister moves we need to check that the two coloring 
rules are consistent; i.e., we need
\begin{eqnarray*}
a & = & c-[c,b]_{x\tr y, y}b 
 =  a+[a,b]_{x,y}b-[a+[a,b]_{x,y}b,b]_{x\tr y, y}b,
\end{eqnarray*}
so it suffices to show that
\[[a,b]_{x,y}=[a+[a,b]_{x,y}b,b]_{x\tr y,y}.\]
The case of condition (ii) where $y=z$ and $b=c$ implies
\[[a,b]_{x,y}=[a+[a,b]_{x,y}b,b+[b,b]_{y,y}b]_{x\tr y,y\tr y},\]
which, by condition (i) and $y\tr y=y$, gives
\[[a,b]_{x,y}=[a+[a,b]_{x,y}b,b]_{x\tr y,y}\]
as required.

Conditions (ii) and (iii) are required by the all-positive Reidemeister
III moves, the last move in the generating set.

\[\includegraphics{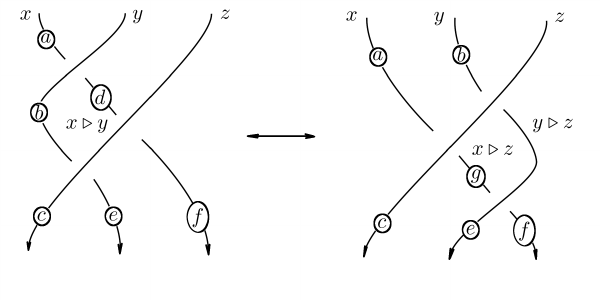}\]

\noindent 
Starting with our bead coloring rule and using bilinearity of $[,]_{x,y}$ we have
\begin{eqnarray*}
d & = & a+[a,b]_{x,y}b \\
e & = & b+[b,c]_{y,z}c \\
f & = & d+[d,c]_{x\tr y,z}c \\
& = & a+[a,b]_{x,y}b+[a+[a,b]_{x,y}b,c]_{x\tr y,z}c \\
& = & a+[a,b]_{x,y}b+[a,c]_{x\tr y,z}c +[a,b]_{x,y}[b,c]_{x\tr y,z}c \\
& = & a+[a,b]_{x,y}b+([a,c]_{x\tr y,z} +[a,b]_{x,y}[b,c]_{x\tr y,z})c. \\
\end{eqnarray*}
Moreover, 
\begin{eqnarray*}
g & = & a+[a,c]_{x,z}c \\
e & = & b+[b,c]_{y,z}c \\
f & = & g+[g,e]_{x\tr z, y\tr z}e \\
 & = & a+[a,c]_{x,z}c+[a+[a,c]_{x,z}c,b+[b,c]_{y,z}c]_{x\tr z, y\tr z}(b+[b,c]_{y,z}c)\\
 & = &  a + Ab + \left( [a,c]_{x,z} + [b,c]_{y,z}A \right)c 
\end{eqnarray*}
where 
\[ A = [a+[a,c]_{x,z}c,b+[b,c]_{y,z}c]_{x\tr z, y\tr z}. \]
Then comparing coefficients in $f$ yields condition (ii) and condition (iii) in 
Definition \ref{def1}.
\end{proof}

\begin{corollary}
Let $X$ be a quandle, $V$ an $\mathbb{F}$-vector space and $\phi$ and 
$X$-bilinear form. Then for any $X$-colored oriented knot or link diagram
$D_f$, the number of $(X,\phi)$-colorings of $D_f$ is unchanged by $X$-colored
Reidemeister moves.
\end{corollary}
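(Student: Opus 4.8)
The plan is to upgrade the existence-and-uniqueness statement of the Proposition into a bijection of coloring sets, from which equality of cardinalities is immediate. Fix an $X$-colored diagram $D_f$ and a single oriented Reidemeister move taking $D_f$ to an $X$-colored diagram $D'_{f'}$, where $f'$ is the $X$-coloring induced on the post-move diagram (this induced $X$-coloring is forced by the crossing relations, as recorded in Section \ref{QB}). The two diagrams share all arcs outside a small neighborhood of the move, so any $\phi$-coloring of either diagram restricts to a partial assignment of vectors on these shared arcs. By the Proposition, every $\phi$-coloring $C$ of $D_f$ extends to a unique $\phi$-coloring $\Psi(C)$ of $D'_{f'}$ agreeing with $C$ outside the neighborhood; the existence clause makes $\Psi(C)$ a genuine $\phi$-coloring, so $\Psi$ is a well-defined map on the set of $\phi$-colorings.

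Next I would produce a candidate inverse. The reverse of an oriented Reidemeister move is again an oriented Reidemeister move, and the local verifications carried out in the proof of the Proposition are symmetric under reversing the move, so the Proposition applies equally to $D'_{f'}\to D_f$, yielding a map $\Psi'$ that sends each $\phi$-coloring of $D'_{f'}$ to its unique extension on $D_f$ agreeing outside the neighborhood. I would then check that $\Psi$ and $\Psi'$ are mutually inverse. Given a $\phi$-coloring $C$ of $D_f$, both $C$ and $\Psi'(\Psi(C))$ are $\phi$-colorings of $D_f$ that agree with $\Psi(C)$ outside the neighborhood, so the uniqueness clause of the Proposition forces them to coincide; hence $\Psi'\circ\Psi=\mathrm{id}$, and symmetrically $\Psi\circ\Psi'=\mathrm{id}$. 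Therefore $\Psi$ is a bijection and the number of $(X,\phi)$-colorings is preserved by this move.

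Finally I would promote the single-move statement to arbitrary $X$-colored Reidemeister moves. Because the moves in the figure form a generating set, any oriented Reidemeister move is realized by a finite sequence of generating moves together with planar isotopies, which visibly preserve colorings, and a composition of bijections is again a bijection; thus the count is invariant under all $X$-colored Reidemeister moves. One bookkeeping point deserves care here: we are counting only those $(X,\phi)$-colorings whose $X$-component equals the fixed coloring $f$, so at each step the $X$-coloring is carried along by the induced relabeling $f\mapsto f'$ while it is the vector (``bead'') data that $\Psi$ matches. I expect the main obstacle to be precisely this clean separation of the fixed $X$-coloring from the varying $V$-coloring, together with the verification that $\Psi$ and $\Psi'$ are mutually inverse; all of the crossing-by-crossing geometric content is already supplied by the Proposition, so no further case analysis is required.
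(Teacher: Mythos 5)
Your argument is correct and is exactly the standard bijection-from-existence-and-uniqueness argument that the paper leaves implicit when it states this corollary without proof immediately after the Proposition. The construction of $\Psi$ and $\Psi'$ and the use of the uniqueness clause to show they are mutually inverse is the intended content, so no gap remains.
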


\noindent
Let us denote the set of $(X,\phi)$-colorings of $D_f$ by $\mathcal{C}_X^{\phi}(D_f)$.

\begin{definition}
Let $X$ be a quandle, $M$ a module over a commutative ring with identity $R$,
and $\phi =\{[,]_{x,y}\ \vert \: x,y\in X\}$ an $X$-bilinear form on $M$. Given 
an $X$-coloring of a diagram $D$ of an oriented link, we define the 
\textit{$X$-bilinear enhanced multiset} of $D$ to be the multiset of bead 
colorings of $D_f$ 
where $D_f$ ranges over the set of $X$-colorings of $D$,
\[\Phi_X^{\phi,M}(D)=\{\mathcal{C}_X^{\phi}(D_f)\ |\ 
f\in\mathrm{Hom}(\mathcal{Q}(D),X)\}\]
and we define the \textit{$X$-bilinear enhanced polynomial} of $D$ to be
\[\Phi_X^{\phi}(D)=\sum_{f\in\mathrm{Hom}(\mathcal{Q}(D),X)} u^{|\mathcal{C}_X^{\phi}(D_f)|}.\]
Moreover, for any knot or link $L$ we define $\Phi_X^{\phi,M}(L)$ and
$\Phi_X^{\phi}(L)$ to be $\Phi_X^{\phi,M}(D)$ and $\Phi_X^{\phi}(D)$ respectively
where $D$ is any diagram of $L$.
\end{definition}

\noindent
Then, the above shows that we have the following corollary.
\begin{corollary}
For any finite quandle $X$, $\mathbb{F}$-vector space $V$ and $X$-bilinear
form $\phi$, the multiset $\Phi_X^{\phi,M}(L)$ and polynomial 
$\Phi_X^{\phi}(L)$ are invariant under Reidemeister moves and hence invariants
of oriented links.
\end{corollary}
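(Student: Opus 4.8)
The plan is to derive this directly from the two facts already in hand: that an $X$-colored Reidemeister move acts bijectively on $X$-colorings, and that, by the preceding Corollary, such a move leaves $|\mathcal{C}_X^{\phi}(D_f)|$ unchanged. The entire argument is a ``coloring-by-coloring'' matching, so no new computation is needed beyond organizing these bijections.

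First I would fix a Reidemeister move carrying a diagram $D$ to a diagram $D'$ that agrees with $D$ outside the neighborhood $N$ of the move. Because the knot quandle is independent of the chosen diagram, the homsets $\mathrm{Hom}(\mathcal{Q}(D),X)$ and $\mathrm{Hom}(\mathcal{Q}(D'),X)$ are identified: concretely, each $X$-coloring $f$ of $D$ agrees outside $N$ with a unique $X$-coloring $f'$ of $D'$, and $f \mapsto f'$ is a bijection of $X$-colorings. This is precisely the statement recalled earlier that a Reidemeister move on an $X$-colored diagram yields a unique $X$-coloring afterward. Next, for each matched pair $(D_f, D'_{f'})$ I would invoke the Proposition: every $\phi$-coloring of $D_f$ extends to a unique $\phi$-coloring of $D'_{f'}$ agreeing with it outside $N$, and conversely, giving a bijection $\mathcal{C}_X^{\phi}(D_f) \to \mathcal{C}_X^{\phi}(D'_{f'})$ and in particular $|\mathcal{C}_X^{\phi}(D_f)| = |\mathcal{C}_X^{\phi}(D'_{f'})|$.

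With these two compatible bijections established, the conclusion is immediate. The polynomial $\sum_{f} u^{|\mathcal{C}_X^{\phi}(D_f)|}$ is unchanged because $f \mapsto f'$ pairs every term of the sum for $D$ with an equal term of the sum for $D'$, so $\Phi_X^{\phi}(L)$ is well defined and invariant. Identifying bead colorings across the move by the bijection above likewise shows the multiset $\{\mathcal{C}_X^{\phi}(D_f)\}$ is preserved, giving invariance of $\Phi_X^{\phi,M}(L)$; since this holds for every diagram of $L$, both quantities depend only on $L$. The main obstacle, such as it is, is bookkeeping: one must check that the move-induced identification of $X$-colorings is the same one under which the bead colorings are matched, so that the sum and the multiset genuinely decompose into matched pairs. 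This is routine once the Proposition is available, because both bijections are the identity outside $N$ and are pinned down inside $N$ by the bead coloring rule, and because every Reidemeister move reduces to the generating set treated in the Proposition without altering the coloring elsewhere in the diagram.
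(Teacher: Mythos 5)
Your argument is correct and is exactly the reasoning the paper leaves implicit when it says ``the above shows'' this corollary: the Reidemeister move induces a bijection of $X$-colorings, the Proposition gives a compatible bijection of bead colorings for each matched pair, and the multiset and polynomial are therefore preserved term by term. No substantive difference from the paper's approach.
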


\begin{remark}
We note that evaluating $\Phi_X^{\phi}(L)$ at $u=1$ yields the quandle counting
invariant $|\mathrm{Hom}(\mathcal{Q}(L),X)|$, so the new polynomial invariant 
is an enhancement of the quandle counting invariant. In the next section we will
we show that the enhancement is proper, i.e., not determined by the 
quandle counting invariant in general.
\end{remark}

\begin{remark}
We note also that nothing in the definition of $\Phi_X^{\phi,M}(L)$ or
$\Phi_X^{\phi}(L)$ depends on the supporting surface of having genus zero, so 
both are also well-defined invariants of virtual links.
\end{remark}

\section{\large\textbf{Examples}}\label{E}

In this section we collect a few examples of the new invariants.

\begin{example}
Let $\mathbb{F}=\mathbb{Z}_2$ and $V=\mathbb{F}^2$. Let $X$ and $\phi$ be the 
quandle and $X$-bilinear form given in Example \ref{ex:nt}. Then the pictured 
Hopf link $L2a1$ has five $X$-colorings as shown:
\[\includegraphics{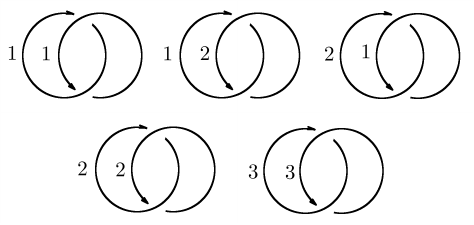}\]
Then putting beads on each arc, we have bead-coloring equations for the first
$X$-coloring
\[
\raisebox{-0.35in}{\includegraphics{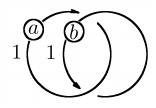}}
\quad
\begin{array}{rcl}
a & = & a+[a,b]_{1,1}b \\
b & = & b+[b,a]_{1,1}a
\end{array}\]
Writing $a=(a_1,a_2)$ and $b=(b_1,b_2)$, these become
\[\begin{array}{rcl}
a_1 & = & a_1+(a_1b_2+a_2b_1)b_1 \\
a_2 & = & a_2+(a_1b_2+a_2b_1)b_2. \\
\end{array}\]

We solve these equations to find the number of valid bead colorings for this $X$-coloring.
If $b=(0,0)$ all four choices of $a\in(\mathbb{Z}_2)^2$ are solutions;
if $b=(1,1)$ we need $a=(1,1)$ or $a=(0,0)$; if $b=(1,0)$ we have solutions
$a=(1,0)$ and $a=(0,0)$ and if $b=(0,1)$ then $a=(0,1)$ and $a=(0,0)$ are 
solutions. Thus we have 10 bead colorings for this $X$-coloring and so its 
contribution to the enhanced polynomial is $u^{10}$.

A similar analysis reveals three more contributions of $u^{10}$ by other $X$-colorings; however, the
monochromatic $X$-coloring by the quandle element $3$ has bead coloring equations
$a=a$ and $b=b$, so there are 16 total bead colorings and this $X$-coloring
contributes $u^{16}$. Hence, we obtain invariant value
\[\Phi_X^{\phi}(L2a1)=u^{16}+4u^{10}.\]
\end{example}

\begin{example}\label{ex:2}
Using \texttt{python}, we computed $\Phi_X^{\phi}(L)$ for a choice of 
orientation for each of the prime
links with up to seven crossings as found at the knot atlas \cite{KA} with 
respect to the quandle $X$ and $X$-bilinear form on $V=(\mathbb{Z}_2)^2$
in Example \ref{ex:nt}; the results are in the table.

\[\begin{array}{r|l}
\Phi_X^{\phi}(L) & L \\ \hline
u^{16}+4u^{10} & L2a1, L6a2, L7a6 \\
5u^{16} & L6a3, L7a5 \\
2u^{40}+7u^{16} & L7a2, L7a3, L7n1, L7n2 \\
5u^{16}+4u^{10} & L4a1, L5a1, L7a4 \\
9u^{16} & L6a1, L7a1 \\
7u^{64}+8u^{22} & L6n1, L7a7 \\
7u^{64}+8u^{28} & L6a5 \\
19u^{64}+8u^{40} & L6a4
\end{array}
\]
In particular, this table shown several examples of links with the same number 
of $X$-colorings distinguished by the bead-coloring information, and we have 
shown that the enhancement is proper.
 \end{example}

\begin{example}
For our final example, we computed the invariant for the same set of links with the same quandle as in Example \ref{ex:2} but with a different choice of 
bilinear enhancement over the same vector space, namely
\[\phi=\left[\begin{array}{ccc}
\left[\begin{array}{rr} 0 & 1 \\ 1 & 0 \end{array}\right] & 
\left[\begin{array}{rr} 0 & 1 \\ 1 & 0 \end{array}\right] &
\left[\begin{array}{rr} 0 & 0 \\ 0 & 0\end{array}\right] \\ \\
\left[\begin{array}{rr} 0 & 1 \\ 1 & 0\end{array}\right] &
\left[\begin{array}{rr} 0 & 1 \\ 1 & 0\end{array}\right] &
\left[\begin{array}{rr} 0 & 0 \\ 0 & 0\end{array}\right] \\ \\
\left[\begin{array}{rr} 0 & 0 \\ 0 & 0\end{array}\right] &
\left[\begin{array}{rr} 0 & 0 \\ 0 & 0\end{array}\right] &
\left[\begin{array}{rr} 0 & 1 \\ 1 & 0\end{array}\right] \end{array}\right].\]

The results are collected in the table.
\[\begin{array}{r|l}
\Phi_X^{\phi}(L) & L \\ \hline
5u^{10} & L2a1, L6a2, L7a6 \\
5u^{16} & L6a3, L7a5 \\
4u^{16}+5u^{10} & L4a1, L5a1, L7a4 \\
4u^{40}+5u^{16} & L7a2, L7a3, L7n1, L7n2 \\
9u^{16} & L6a1, L7a1 \\
6u^{40}+9u^{22} & L6n1, L7a7 \\
6u^{40}+9u^{28} & L6a5 \\
18u^{64}+9u^{40} & L6a4 
\end{array}\]

 \end{example}

\section{\large\textbf{Questions}}\label{Q}

We end with some questions for future research.

The bilinear enhancement idea could be generalized in a number of ways, 
all of which could provide interesting new families of invariants. What 
extra conditions would be needed to replace the understand bead coefficient
with a second bilinear form depending on the quandle colors? How about 
generalizations to other knot-theoretic categories such as spatial graphs,
handlebody-knots or surface-links? Alternatively, we could replace the
quandle colorings with invariant colorings by other algebraic structures
such as groups, biquandles or tribrackets.

As with all such families of invariants, it is interesting to ask what 
precise information about the knot or link is being extracted by these
invariants. For example, in \cite{HN} the quandle colorings of a two-component
link by a specific family of quandles were shown to be measuring the linking 
number of the link. This and many of other families of coloring invariants are 
waiting for similar connections with other invariants and properties of knots 
and links to be found.

\bibliography{wg-sn}{}

\begin{thebibliography}{10}

\bibitem{KA}
D.~E. Bar-Natan.
\newblock The knot atlas \textup{http://katlas.org/wiki/Main\_Page}.

\bibitem{CEGM}
J.~S. Carter, M.~Elhamdadi, M.~Gra\~{n}a, and M.~Saito.
\newblock Cocycle knot invariants from quandle modules and generalized quandle
  homology.
\newblock {\em Osaka J. Math.}, 42(3):499--541, 2005.

\bibitem{CJKLS}
J.~S. Carter, D.~Jelsovsky, S.~Kamada, L.~Langford, and M.~Saito.
\newblock State-sum invariants of knotted curves and surfaces from quandle
  cohomology.
\newblock {\em Electron. Res. Announc. Amer. Math. Soc.}, 5:146--156, 1999.

\bibitem{EN}
M.~Elhamdadi and S.~Nelson.
\newblock {\em Quandles---an introduction to the algebra of knots}, volume~74
  of {\em Student Mathematical Library}.
\newblock American Mathematical Society, Providence, RI, 2015.

\bibitem{HHNYZ}
A.~Haas, G.~Heckel, S.~Nelson, J.~Yuen, and Q.~Zhang.
\newblock Rack module enhancements of counting invariants.
\newblock {\em Osaka J. Math.}, 49(2):471--488, 2012.

\bibitem{HN}
N.~Harrell and S.~Nelson.
\newblock Quandles and linking number.
\newblock {\em J. Knot Theory Ramifications}, 16(10):1283--1293, 2007.

\bibitem{J}
D.~Joyce.
\newblock A classifying invariant of knots, the knot quandle.
\newblock {\em J. Pure Appl. Algebra}, 23(1):37--65, 1982.

\bibitem{M}
S.~V. Matveev.
\newblock Distributive groupoids in knot theory.
\newblock {\em Mat. Sb. (N.S.)}, 119(161)(1):78--88, 160, 1982.

\bibitem{N2}
S.~Nelson.
\newblock A polynomial invariant of finite quandles.
\newblock {\em J. Algebra Appl.}, 7(2):263--273, 2008.

\bibitem{N}
S.~Nelson.
\newblock A survey of quantum enhancements.
\newblock In {\em Knots, low-dimensional topology and applications}, volume 284
  of {\em Springer Proc. Math. Stat.}, pages 163--178. Springer, Cham, 2019.

\bibitem{P}
M.~Polyak.
\newblock Minimal generating sets of reidemeister moves.
\newblock {\em Quantum Topol.}, 1:399--411, 2010.

\end{thebibliography}
\bibliographystyle{abbrv}

\bigskip

\noindent
\textsc{Department of Mathematical Sciences \\
Claremont McKenna College \\
850 Columbia Ave. \\
Claremont, CA 91711}

\end{document}